\documentclass[11pt]{article}
\usepackage{amssymb,float,amsthm,hyperref,xcolor,amsmath,tipa}
\hypersetup{
    colorlinks,
    citecolor=violet,
    filecolor=blue,
    linkcolor=blue,
    urlcolor=violet
}
\usepackage{thmtools}

\newtheorem{theorem}{Theorem}[section]
\newtheorem{lemma}[theorem]{Lemma}

\theoremstyle{definition}
\newtheorem{remark}[theorem]{Remark}

\newtheorem{problem}[theorem]{Problem}
\newtheorem{conjecture}[theorem]{Conjecture}
\newtheorem*{thm1.1PartI}{Theorem 1.1(i)}
\newtheorem*{thm1.1PartII}{Theorem 1.1(ii)}
\newtheorem*{thm1.4PartI}{Theorem 1.4(i)}
\newtheorem*{thm1.4PartII}{Theorem 1.4(ii)}

\usepackage[margin=1in]{geometry}
\counterwithin{figure}{section}

\title{A sharp lower bound for some reciprocal Rado numbers}
\author{Collier Gaiser\thanks{Department of Mathematics, Community College of Aurora, Aurora, CO 80011,  United States of America. Email: {\tt colliergaiser@gmail.com}}
\and 
Mojtaba Ramezanpour\thanks{Community College of Aurora, Aurora, CO 80011, United States of America. Email: {\tt mrramezanpour7698@gmail.com}}
}
\date{}
\begin{document}
\maketitle

\maketitle
\begin{abstract}
Let $f_r(k)$ be the smallest $n$ such that every $r$-coloring of $\{1,2,\ldots,n\}$ has a monochromatic solution to the equation
\[
\frac{1}{x_1}+\frac{1}{x_2}+\cdots+\frac{1}{x_k}=\frac{1}{x_{k+1}},
\]
where $x_1,x_2,\ldots,x_k$ are not necessarily distinct. In this paper, we prove that $f_r(2)\geq 4^r/2$ for all $r\geq1$, and $f_r(k)\geq(2^r-1)k^r$ for all $k\geq3$ and $r\geq1$. When $r=2$, we show that, if $k=3\cdot2^m$ for some positive integer $m$, then $f_2(k)=3k^2$; and if $k=p^m$ for some odd prime number $p$ and positive integer $m$, then $f_2(k)\geq3k^2+1$. We also provide new computational results for $f_2(k)$ and $f_3(k)$, as well as a generalization of our lower bounds for $f_2(k)$ to equations with general coefficients.

\end{abstract}

{\small \textbf{Keywords:} arithmetic Ramsey theory, unit fractions, Rado numbers} \\
\indent {\small \textbf{AMS 2020 subject classification:} 05D10; 11B75}

\maketitle

\section{Introduction}
In arithmetic Ramsey theory, by the celebrated Rado's theorem \cite{Rado1933} (see also \cite[Chapter 9]{LandmanRobertson2014}), for all integers $k\geq2$ and $r\geq1$, if $n$ is large enough, then every $r$-coloring of $\{1,2,\ldots,n\}$ has a monochromatic solution to the equation
\begin{equation}\label{Equation:Linear}
x_1+x_2+\cdots+x_k=x_{k+1}.
\end{equation}
Let $R_r(k)$ be the smallest positive integer $n$ such that every $r$-coloring of $\{1,2,\ldots,n\}$ has a monochromatic solution to Equation~(\ref{Equation:Linear}), where $x_1,x_2,\ldots,x_k$ are not necessarily distinct. Due to Schur's theorem \cite{Schur1916} (see also \cite[Chapter 8]{LandmanRobertson2014}), the values of $R_r(2)$ are known as Schur numbers and it is known that $R_1(2)=2$, $R_2(2)=5$, $R_3(2)=14$, and $R_4(2)=45$. Only very recently in 2018, using a computer-generated proof that required two petabytes of space, Heule \cite{Heule2018} determined that $R_5(2)=161$. All other Schur numbers are still undetermined. We refer interested readers to \cite{ACPPRT2022} for recent developments on Schur numbers.

Schur \cite{Schur1916} (see also \cite[Theorem 8.9]{LandmanRobertson2014}) proved that $R_r(2)\geq(3^r-1)/2+1$ for all $r\geq1$. Zn\'{a}m \cite{Znam1966} generalized this result and proved that, for all $k\geq2$ and $r\geq1$,
\[
R_r(k)\geq\frac{k-1}{k}\left[(k+1)^r-1\right]+1.
\]
We note that Zn\'{a}m \cite{Znam1966} proved the above lower bound in 1966, and then Beutelspacher and Brestovansky~\cite{BeutelspacherBrestovansky1982} reproved this lower bound in 1982. When $r=2$ and $3$, the exact values for $R_r(k)$ are known for all $k$ and they coincide with the lower bound of Zn\'{a}m: Beutelspacher and Brestovansky~\cite{BeutelspacherBrestovansky1982} proved that $R_2(k)=k^2+k-1$ for all $k\geq2$, and, recently in 2019, Boza, Mar\'{i}n, Revuelta, and Sanz \cite{BMRS2019} confirmed that $R_3(k)=k^3+2k^2-2$ for all $k\geq2$.

Brown and R\"{o}dl \cite{BrownRodl1991}, and Lefmann \cite{Lefmann1991} independently proved that, among other things, Rado's theorem still holds when the variables are replaced with their reciprocals. This implies that, for all positive integers $k\geq2$ and $r\geq1$, if $n$ is large enough, then every $r$-coloring of $\{1,2,\ldots,n\}$ has a monochromatic solution to the equation
\begin{equation}\label{Equation:Main}
\frac{1}{x_1}+\frac{1}{x_2}+\cdots+\frac{1}{x_k}=\frac{1}{x_{k+1}}.
\end{equation}
Let $f_r(k)$ be the smallest positive integer $n$ such that every $r$-coloring of $\{1,2,\ldots,n\}$ has a monochromatic solution to Equation~(\ref{Equation:Main}), where $x_1,x_2,\ldots,x_k$ are not necessarily distinct. Brown and R\"{o}dl \cite{BrownRodl1991} proved that $f_2(k)\leq k^2(k^2-k+1)(k^2+k-1)$ and the first author \cite{Gaiser2024} improved this upper bound to $f_2(k)\leq6k(k+1)(k+2)$ in 2024. Using a result of Boza, Mar\'{i}n, Revuelta, and Sanz \cite{BMRS2019}, the first author \cite{Gaiser2024} also showed a polynomial upper bound for $f_3(k)$.

As for the lower bound, Tejaswi and Thangdurai \cite{TejaswiThangadurai2002} proved that $f_r(2)\geq3\cdot2^{r+3}$ for all $r\geq3$, and the first author \cite{Gaiser2024} observed that $f_r(k)\geq k^r$ for all $k\geq2$ and $r\geq1$. In this paper, we first improve both lower bounds.
\begin{theorem}\label{Theorem:Main}
For all $r\geq1$, we have
\[
f_r(2)\geq 4^r/2;
\]
and, for all $k\geq3$ and $r\geq1$, we have \[f_r(k)\geq(2^r-1)k^r.\]
\end{theorem}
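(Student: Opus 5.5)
The plan is to build explicit colorings by induction on $r$. Let $N_r$ denote the claimed bound, so that we want an $r$-coloring of $\{1,\ldots,N_r-1\}$ with no monochromatic solution to Equation~(\ref{Equation:Main}). Two elementary facts drive everything. First, Equation~(\ref{Equation:Main}) is homogeneous, so a solution $(x_1,\ldots,x_{k+1})$ scales to a solution $(tx_1,\ldots,tx_{k+1})$. Second, any solution satisfies
\[
\frac{\min_i x_i}{k}\le x_{k+1}\le \frac{\max_i x_i}{k},
\]
because $k/\max_i x_i\le 1/x_{k+1}\le k/\min_i x_i$. In particular, a color class contained in an interval $[a,ka)$ contains no solution. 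Iterating this gives the known bound $k^r$, by coloring $[k^{c-1},k^c)$ with color $c$. Our target is a recursion that beats this. For $k\ge3$ the bound satisfies
\[
(2^{r+1}-1)k^{r+1}=2k\cdot(2^r-1)k^r+k^{r+1},
\]
so I would aim for $N_{r+1}=2kN_r+k^{r+1}$. For $k=2$ the target is simply $N_{r+1}=4N_r$, with base case $N_1=2$ (the single color $\{1\}$ has no solution, since $1+1\ne1$).

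The construction I would try for the inductive step is as follows. Take a good $r$-coloring $\chi$ of $[1,N_r)$ and give the new color $r+1$ to a "sparse'' set $C$. A natural choice is $C=\{x<N_{r+1}: k\nmid x\}\cap[N_r,N_{r+1})$, possibly together with an initial interval of length about $k^{r+1}$ that handles the additive term. The remaining integers are colored by scaling: for $x$ with $k\mid x$ (or $x$ in the low range) we set $\chi'(x)=\chi(x/k)$ or $\chi(x)$, as appropriate, so that the old colors occupy a scaled copy of the old coloring. A monochromatic solution in an old color whose entries are all divisible by $k$ would descend, by homogeneity, to a monochromatic solution for $\chi$, which is impossible. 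For $k=2$ the same scheme should work with divisibility by $2$ and scale factor $4$, the extra factor coming from combining the parity split with the interval trick.

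The main obstacle is showing that the new color class $C$, built from non-multiples of $k$ spread over a range of ratio roughly $2k$, is solution-free. The interval trick alone only handles ratio $k$. So I would need an arithmetic argument. If $x_1,\ldots,x_{k+1}$ are all coprime to $k$ (or at least not divisible by $k$), clear denominators: with $L=\operatorname{lcm}(x_1,\ldots,x_k)$, we have $x_{k+1}\sum_i L/x_i=L$. I would then try a $p$-adic valuation argument for a prime $p\mid k$, showing that when $k$ equal-valuation terms are summed, the valuation forces $p\mid x_{k+1}$, or forces some $x_i$ out of the admissible range. Combined with the size window $[\min/k,\max/k]$, this should exclude all solutions. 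A secondary but delicate point is solutions mixing old colors across the two scaled blocks (the entries divisible by $k$ and the entries in the low range). These must be ruled out by the size window, and that check is exactly what fixes the precise cut points $N_r$, $kN_r$ and $2kN_r$. If the divisibility-by-$k$ version fails for composite $k$, I would first retry with divisibility by a single prime factor of $k$ before changing the overall recursion.
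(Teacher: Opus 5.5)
There is a genuine gap, and it sits exactly at the step you flag as the main obstacle. The new color class $C=\{x\in[N_r,N_{r+1}) : k\nmid x\}$ is in general \emph{not} solution-free, so no valuation argument can prove that it is.

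For $k=3$, $r=1$ you have $N_1=3$ and $N_2=27$, and $\frac{1}{4}=\frac{1}{8}+\frac{1}{16}+\frac{1}{16}$ with $4,8,16\in[3,27)$ all prime to $3$. For $k=4$ you have $N_2=48$, and $\frac{1}{6}=\frac{1}{18}+\frac{1}{18}+\frac{1}{30}+\frac{1}{45}$ has no entry divisible by $4$.

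The reason is structural. Suppose every $x_i$ is prime to an odd prime $p$, and put $L=\mathrm{lcm}(x_1,\dots,x_{k+1})$. Then $L/x_{k+1}=\sum_{i\le k}L/x_i$ is a relation among integers prime to $p$. For odd $p$, a sum of $k\ge2$ units mod $p$ can be any residue, so the congruence yields no contradiction. The only prime that gives an obstruction is $p=2$, since a sum of an even number of odd integers is even, and that needs $k$ even. For $k=3$, your fallback of passing to a single prime factor of $k$ is therefore the same failing argument.

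Independently, the old colors are never actually defined. For $k\mid x$ with $x<N_{r+1}=2kN_r+k^{r+1}$, the quotient $x/k$ runs up to $2N_r+k^r>N_r$, which is outside the domain of $\chi$. The mixed solutions you call delicate are also never checked. So the recursion $N_{r+1}=2kN_r+k^{r+1}$, though arithmetically correct, is not realized by any specified coloring.

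The missing idea is that no arithmetic is needed. The paper colors directly rather than inductively. For $k\ge3$ it takes $A_1=[2^r-1,(2^r-1)k)$ and, for $2\le i\le r$, makes color $i$ the union $A_i'\cup A_i''$. Here $A_i'=[2^{i-1}-1,2^i-1)$ is a short bottom interval of ratio below $2$, and $A_i''=[(2^r-1)k^{i-1},(2^r-1)k^i)$ is a long top interval of ratio $k$.

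\begin{itemize}
\item A solution with all $b_j\in A_i''$ forces, by your own size window, $a\in[(2^r-1)k^{i-2},(2^r-1)k^{i-1})$, which has a different color.
\item A solution using some $b_j\in A_i'$ forces $a\in A_i'$. At most one $b_j$ can lie in $A_i'$, because two such reciprocals already sum to at least $1/a$.
\item Lemma~\ref{Lemma:Elementary} then gives $1/a-1/b_k\ge 1/((2^i-3)(2^i-2))$. Meanwhile the other $k-1$ terms, all from $A_i''$, sum to at most $(k-1)/((2^r-1)k^{i-1})$, which is too small.
\end{itemize}

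The bottom intervals $A_2',\dots,A_r'$ fill $[1,2^r-1)$, which pushes $A_1$ up and produces the factor $2^r-1$. For $k=2$ the same shape, with $A_i'=[2^{i-2},2^{i-1})$ and $A_i''=[2^{i-2}2^r,2^{i-1}2^r)$, gives $4^r/2$.
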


When $r=2$, the lower bound $f_2(k)\geq3k^2$ for $k\geq3$ in Theorem~\ref{Theorem:Main} is sharp. The equality is reached for infinitely many values of $k$; however, there are also infinitely many values of $k$ such that $f_2(k)>3k^2$. We say that $k$ is an odd prime power if $k=p^m$ for some odd prime number $p$ and positive integer $m$. We prove the following:
\begin{theorem}\label{Theorem:2-ColoringsCombined}
If $k=3\cdot2^m$ for some positive integer $m$, then $f_2(k)=3k^2$; and if $k$ is an odd prime power, then $f_2(k)\geq3k^2+1$.
\end{theorem}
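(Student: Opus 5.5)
The statement has two parts, and I would prove them separately. The common reduction is the following. A monochromatic solution of Equation~(\ref{Equation:Main}) with $x_1=\cdots=x_a=u$ and $x_{a+1}=\cdots=x_k=v$ means $a/u+(k-a)/v=1/w$. So for each admissible pair $(c,d)$ with $a/c+(k-a)/d=1$, a valid coloring can never give $w$, $cw$ and $dw$ all the same color (when $c=d$ this reads: $w$ and $cw$ get different colors). In particular $a=k$ gives the basic constraint that $w$ and $kw$ differ in color. This explains the role of the powers $1,k,k^2$ and the threshold $k^2\cdot 3$.

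\emph{Equality for $k=3\cdot 2^m$.} Theorem~\ref{Theorem:Main} with $r=2$, $k\ge 6$, gives $f_2(k)\ge 3k^2$, so only the upper bound is needed. Suppose some red/blue coloring of $\{1,\dots,3k^2\}$ has no monochromatic solution, and say $1$ is red.
\begin{itemize}
\item By the basic constraint, $k$ is blue and $k^2$ is red.
\item The divisibility of $k$ by $3$ and by $2^m$ supplies many cheap identities. Examples are
\[
\tfrac{k}{2}\cdot\tfrac{2}{3k}+\tfrac{k}{2}\cdot\tfrac{4}{3k}=1 \quad (\text{for } m\ge 2),
\qquad
\tfrac{k}{3}\cdot\tfrac{1}{k/2}+\tfrac{2k}{3}\cdot\tfrac{1}{2k}=1,
\]
together with similar ones using denominators among $k/2,\,2k/3,\,3k/4,\,3k/2,\,2k,\,3k$.
\item Scaling these identities by $w\in\{1,2,3,k,2k,3k,\dots\}$, keeping every denominator at most $3k^2$, gives triples that cannot be monochromatic.
\end{itemize}
I would then chase colors along the chain $1,k,k^2$ and the auxiliary numbers $3k/2,\,3k/4,\,2k,\,3k,\,k^2/2,\,3k^2/2,\,2k^2,\,3k^2$. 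The goal is to force both colors onto one of these forbidden triples. The case $m=1$, that is $k=6$, may need separate identities, or can be checked directly by computer.

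\emph{Odd prime powers, $k=p^m$.} Here I must exhibit a $2$-coloring of $\{1,\dots,3k^2\}$ with no monochromatic solution. I would start from the extremal coloring behind Theorem~\ref{Theorem:Main} on $[1,3k^2-1]$, which presumably colors by size blocks such as $[1,k-1]$, $[k,k^2-1]$, $[k^2,3k^2-1]$, or a refinement of these. I would then argue that $3k^2$ can be added to a suitable class.

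The key tool is a $p$-adic argument. Clear denominators in $\sum_{i\le k} 1/x_i=1/x_{k+1}$ and look at the terms of minimal $p$-adic valuation. Since $k=p^m$ and $p$ is odd, the possible solutions involving $3k^2$ are very rigid: the number of terms sharing the minimal valuation must satisfy a congruence modulo $p$. I would verify that every candidate solution containing $3k^2$ needs a partner of the form $3k^2\cdot c/d$ with $c/d$ constrained modulo $p$, and that all such partners lie in the other color class. Oddness of $p$ enters because the identity that kills $3k^2$ for $k=3\cdot 2^m$ uses the denominators $2$ and $4$; these are unavailable, since a power of an odd prime cannot supply the needed halving.

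\emph{Main obstacle.} The hardest step is the finite but delicate color chase in the first part. One has to find a small, explicit set of identities that closes every branch of the case analysis for all $m$ simultaneously. I would first find the forcing chain by computer for $k=12$ and $k=24$, and then generalize the identities that are used.
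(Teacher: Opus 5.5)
\textbf{There is a genuine gap in each part.} Both halves of your proposal stop before the actual argument.

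\textbf{The case $k=3\cdot2^m$.} Here the whole content of the upper bound is the color chase, and you leave it as a computer search to be generalized later. Your two identities are correct: they forbid $\{1,3k/4,3k/2\}$ and $\{1,k/2,2k\}$ from being monochromatic. But you never show that these identities, their rescalings, and the rule that $w$ and $kw$ differ combine into a contradiction. There is also no a priori reason that a small, multiplicatively structured pool such as $k/2,3k/4,3k/2,2k,3k,k^2/2,\dots$ should suffice. Compare the paper's remark that the divisors of $48$ admit a good $2$-coloring for $k=4$, although $f_2(4)=48$.

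The missing idea is the paper's case split on the colors of $1,2,3,4$.
In the two main cases, you push one color up a chain by induction on $j\le m$, from $\{2^j,3\cdot2^{j-1},2^{j+1}\}$ to $\{3\cdot2^j,2^{j+2}\}$. This uses identities such as $\frac1{2^j}=\frac1{3\cdot2^{j-1}}+(k-1)\cdot\frac1{3\cdot2^j(k-1)}$ and $\frac1{2^{j+2}}=(\frac k2-3)\cdot\frac1{3\cdot2^j(k-2)}+(\frac k2+3)\cdot\frac1{3\cdot2^{j+1}(k-2)}$. Their auxiliary denominators $3\cdot2^j(k-1)$ and $3\cdot2^j(k-2)$ are not of the shape you proposed, but they stay below $2k(k-1)\le3k^2$.
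When $1,2,3,4$ share a color, reaching $k$ in the color of $1$ gives the monochromatic solution $1=k\cdot\frac1k$. When $1$ has the other color, a last chase through $k(k-3)$, $2k(k-3)$, $3k/2$, $2k^2$, $3k^2$ finishes the argument. The cases $\Delta(2)\ne\Delta(3)$ and $\Delta(1)=\Delta(2)=\Delta(3)\ne\Delta(4)$ fall to a few explicit identities. Your plan to settle $k=6$ by computer is also what the paper does.

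\textbf{Odd prime powers.} You have the right tool, $p$-adic valuation, but neither the coloring nor the verification.

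The coloring of Theorem~\ref{Theorem:Main} for $r=2$ is not cut at $k$ and $k^2$. Putting $[1,k-1]\cup[k^2,3k^2-1]$ in one class already fails for $k=5$, since $\frac13=\frac14+4\cdot\frac1{48}$. The other assignments of your blocks contain $1=k\cdot\frac1k$ or $\frac1k=k\cdot\frac1{k^2}$. The correct coloring is $\{1,2\}\cup\{3k,\dots,3k^2-1\}$ versus $\{3,\dots,3k-1\}$. The element $3k^2$ must join the second class, because $\frac1{3k}=k\cdot\frac1{3k^2}$.

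The verification then goes as follows.
\begin{itemize}
\item Since $3k^2$ is the largest element, it can only occur among $x_1,\dots,x_k$, say $\ell$ times.
\item If $\ell=k$, then $x_{k+1}=3k$, which is not in the class.
\item If $1\le\ell<k$, every other member of the class is $<3k\le p^{m+1}$, so it has $p$-adic valuation at most $m$. Hence $\frac{\ell}{3k^2}=\frac1{x_{k+1}}-\sum_{i\le k,\,x_i\ne3k^2}\frac1{x_i}$ has valuation $\ge-m$. This forces $p^m\mid\ell$, which is impossible.
\end{itemize}
So what you need is divisibility of the multiplicity by $p^m$; a congruence modulo $p$ alone is not enough when $m\ge2$. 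Oddness of $p$ enters exactly through $3k\le p^{m+1}$, not through the unavailability of the halving identities from the first part. For $p=2$, the class element $2k$ has valuation $m+1$ and the argument breaks, consistent with $f_2(4)=3\cdot4^2$.
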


Myers and Parrish~\cite{MyersParrish2018} computed $f_2(k)$ for $k\in\{2,3,4\}$ and we provide more computational results by calculating $f_2(k)$ for $k\in\{5,6,\ldots,25\}$. The key feature of these computational results is that, for all $k\in\{3,4,\ldots,25\}$, we have $f_2(k)=3k^2$ if and only if $k$ is an odd prime power. Hence, we make the following conjecture:
\begin{conjecture}
    If $k\geq4$ is not an odd prime power, then $f_2(k)=3k^2$.
\end{conjecture}

When $x_1,x_2,\ldots,x_k$ have arbitrary coefficients, it is not guaranteed that there exists $n$ such that every $r$-coloring of $\{1,2,\ldots,n\}$ has a monochromatic solution to the equation
\begin{equation}\label{Equation:GeneralLinear}
    a_1x_1+a_2x_2+\cdots+a_kx_k=x_{k+1},
\end{equation}
where $a_1,a_2,\ldots,a_k$ are positive integers and $k\geq2$. However, it is still true that, for any positive integer $k\geq2$, if $n$ is large enough, then every $2$-coloring of $\{1,2,\ldots,n\}$ has a monochromatic solution to Equation~(\ref{Equation:GeneralLinear}) (see \cite[Theorem~9.2]{LandmanRobertson2014} or \cite[Theorem~0.1]{MR2007}). Let $R(a_1,a_2,\ldots,a_k)$ be the smallest positive integer $n$ such that every $2$-coloring of $\{1,2,\ldots,n\}$ has a monochromatic solution to Equation~(\ref{Equation:GeneralLinear}), where $x_1,x_2,\ldots,x_k$ are not necessarily distinct. Hopkins and Schaal \cite{HS2005}, and Guo and Sun~\cite{GS2008} proved that \[R(a_1,a_2,\ldots,a_k)=\min\{a_1,a_2,\ldots,a_k\}(a_1+a_2+\cdots+a_k)^2+a_1+a_2+\cdots+a_k-\min\{a_1,a_2,\ldots,a_k\}.\]

Again, by the results of Brown and R\"{o}dl \cite{BrownRodl1991}, and Lefmann \cite{Lefmann1991}, for all positive integers $k\geq2$, if $n$ is large enough, then every $2$-coloring of $\{1,2,\ldots,n\}$ has a monochromatic solution to the equation
\begin{equation}\label{Equation:GeneralUnitFractions}
    \frac{a_1}{x_1}+\frac{a_2}{x_2}+\cdots+\frac{a_k}{x_k}=\frac{1}{x_{k+1}},
\end{equation}
where $a_1,a_2,\ldots,a_k$ are positive integers. Let $f(a_1,a_2,\ldots,a_k)$ be the smallest positive integer $n$ such that every $2$-coloring of $\{1,2,\ldots,n\}$ has a monochromatic solution to Equation~(\ref{Equation:GeneralUnitFractions}), where $x_1,x_2,\ldots,x_k$ are not necessarily distinct. Generalizing Theorem~\ref{Theorem:Main} for 2-colorings, we prove the following:
\begin{theorem}\label{Theorem:General2-Coloring}
We have \[f(a_1,a_2)\geq2\min\{a_1,a_2\}(a_1+a_2)^2;\] and, for all $k\geq3$, we have
    \[
    f(a_1,a_2,\ldots,a_k)\geq(2\min\{a_1,a_2,\ldots,a_k\}+1)(a_1+a_2+\cdots+a_k)^2.
    \]
\end{theorem}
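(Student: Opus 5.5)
We would prove Theorem~\ref{Theorem:General2-Coloring} by an explicit interval $2$-coloring, in the spirit of the Zn\'{a}m-type colorings for Equation~(\ref{Equation:GeneralLinear}). Write $S=a_1+\cdots+a_k$ and $m=\min\{a_1,\ldots,a_k\}$, and let $N$ denote the claimed bound. We would exhibit a $2$-coloring of $\{1,\ldots,N-1\}$ with no monochromatic solution to Equation~(\ref{Equation:GeneralUnitFractions}).

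Two elementary observations about any solution $(x_1,\ldots,x_k,y)$ come first. Each term satisfies $a_i/x_i<1/y$, so
\[
x_i>a_iy\ge my \quad\text{for every } i.
\]
Also $1/y=\sum a_i/x_i\le S/\min_i x_i$, so $\min_i x_i\le Sy$. Likewise $\max_i x_i\ge Sy$, with equality exactly when all $x_i=Sy$.

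The plan is a three-block coloring: red on $[1,S-1]\cup[S^2,N-1]$ and blue on $[S,S^2-1]$. The bound $N=(2m+1)S^2$ is the target for $k\ge3$, and $2mS^2$ is the target for $k=2$. We would check the cases as follows.

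\emph{Blue solution.} Here $y\ge S$ forces $\max_i x_i\ge Sy\ge S^2$, which leaves the blue block. So no blue solution exists.

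\emph{Red solution with $y\ge S^2$.} This forces $\max_i x_i\ge Sy\ge S^3$. Since $S^3>N$ in the relevant range, which is where we use $k\ge 2$ and $S\ge 2m+1$ or the appropriate inequality, there is no such solution.

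\emph{Red solution with $y\le S-1$.} This is the main case, and we would split it by where the $x_i$ lie.
\begin{itemize}
\item If all $x_i\ge S^2$, then $\sum a_i/x_i\le S/S^2=1/S<1/y$, a contradiction.
\item If all $x_i\le S-1$, then every $x_i$ lies in $(my,S-1]$. We would need to show that $\sum a_i/x_i=1/y$ is impossible there. If this fails, we would shrink the first red block, for example to $[1,\lceil S/?\rceil]$, or start the blue block at $mS$ or similar. The parameters would be tuned so that $x_i>a_iy$ pushes every small $x_i$ past the red block unless $y$ is tiny.
\item The remaining case is mixed: some $x_j$ small and some large, with all large ones at most $N-1$. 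Here we would use the bound on the remainder. Let $\rho=1/y-\sum_{\text{small}}a_i/x_i$. Then $\rho$ is a positive rational whose denominator divides $\mathrm{lcm}(y,x_j)$, which is less than $S^2$. Hence $\rho\ge 1/S^2$ roughly, and more precisely $\rho\ge 1/(y x_j)$. On the other hand, $\rho=\sum_{\text{large}}a_i/x_i\le (S-m')/S^2$.
\end{itemize}

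Comparing the two bounds on $\rho$, while exploiting that the large $x_i$ are bounded by $N-1=(2m+1)S^2-1$, should produce the constant $2m+1$. The key point is that a large $x_i$ with coefficient at least $m$ contributes at least $m/((2m+1)S^2)$. For $k=2$ only one variable can be large, so the analogous count gives $2m$ instead of $2m+1$, matching the weaker first inequality of the theorem.

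The main obstacle is precisely this mixed case, together with pinning down the exact block endpoints so that the arithmetic forces $N=(2m+1)S^2$ and not something smaller. If the naive three-block coloring fails there, we would first try a four-block refinement. That refinement would insert a second blue interval $[S^2, (m+1)S^2)$ or similar, so that a red $y$ forces a large $x_i$ beyond $(2m+1)S^2$, with the factor $2$ coming from the $r=2$ analogue of $(2^r-1)$ in Theorem~\ref{Theorem:Main}. Specializing all $a_i=1$ must recover Theorem~\ref{Theorem:Main} for $r=2$, so we would use that case as a consistency check on the endpoints.
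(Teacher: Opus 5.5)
There is a genuine gap. Your three-block coloring (red on $[1,S-1]\cup[S^2,N-1]$, blue on $[S,S^2-1]$) does not avoid monochromatic solutions in general, and it fails exactly in the mixed case you left open. Two examples:
\begin{itemize}
\item For $k=2$ and $(a_1,a_2)=(1,3)$ you have $S=4$, $N=32$ and red $=[1,3]\cup[16,31]$. Then $\frac{1}{2}=\frac{1}{3}+\frac{3}{18}$ is a red solution.
\item For $k=4$ and all $a_i=1$ you have red $=[1,3]\cup[16,47]$. Then $\frac{1}{2}=\frac{1}{3}+\frac{1}{18}+\frac{1}{18}+\frac{1}{18}$ is a red solution.
\end{itemize}
Your two bounds on the remainder, $\rho\ge 1/(yx_j)$ and $\rho\le (S-m)/S^2$, are compatible (here $1/6\le 3/16$), so no contradiction can come from them. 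When $S$ is large relative to $m$, the first red block contains pairs $y<x_j$ whose gap $1/y-a_j/x_j$ can be as small as roughly $1/S^2$, and the large red terms can fill it. Your own consistency check detects this: for $a_i=1$ your breakpoints are $k$ and $k^2$, while the coloring behind Theorem~\ref{Theorem:Main} for $r=2$ uses $3$ and $3k$. (The all-small case you worried about is in fact trivial: $\sum a_i/x_i\ge S/(S-1)>1\ge 1/y$.)

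The missing idea is to size the first red block by $m$ rather than by $S$, and only then scale by $S$. For $k\ge3$ the paper colors $[1,2m]\cup[(2m+1)S,(2m+1)S^2-1]$ with one color and $[2m+1,(2m+1)S-1]$ with the other. For $k=2$ it uses $[1,2m-1]\cup[2mS,2mS^2-1]$ and $[2m,2mS-1]$.

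Your scaling facts $\max_i x_i\ge Sy$ and $\min_i x_i\le Sy$ then dispose of the middle color class. They also handle the case where all $x_i$ are large, since that forces $y$ into the middle block.

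In the mixed case every small $x_j$ has $a_j/x_j\ge \frac12$, strictly more when $x_j\le 2m-1$.
\begin{itemize}
\item Two small variables are impossible. For $k\ge 3$ they already give a sum of at least $1$ plus a further positive term. For $k=2$ they give a sum strictly greater than $1$.
\item A single small $x_j$ forces $y=1$. Since $a_j\le x_j-1$, the gap is $1-a_j/x_j\ge 1/x_j\ge 1/(2m)$ (resp.\ $1/(2m-1)$).
\item The large terms contribute at most $(S-a_j)/((2m+1)S)<1/(2m+1)$ (resp.\ less than $1/(2m)$), which contradicts the gap.
\end{itemize}
So the constant $2m+1$ versus $2m$ comes from the length of the first block, not from counting contributions of large variables. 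For $k=2$ the block must stop at $2m-1$ because two terms equal to $m/(2m)$ could sum to exactly $1$. No four-block refinement is needed.
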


The rest of this paper is organized as follows. We prove Theorem~\ref{Theorem:Main} in Section~\ref{Section:ProofMainTheorem}. In Section~\ref{Section:Computation2Coloring}, we first present some computational results for $f_2(k)$ and then prove Theorem~\ref{Theorem:2-ColoringsCombined}. Theorem~\ref{Theorem:General2-Coloring} is proved in Section~\ref{Section:GeneralTheorem2Coloring}. We present some computational results for $f_3(k)$ and discuss some open problems in Section~\ref{Section:Concluding}.

\section{Proof of Theorem \ref{Theorem:Main}}\label{Section:ProofMainTheorem}
We start with an elementary observation.
\begin{lemma}\label{Lemma:Elementary}
    Let $s$ and $t$ be positive integers. If $a,b\in\{s,s+1,\ldots,s+t\}$ with $a<b$, then
    \[
    \frac{1}{a}-\frac{1}{b}\geq\frac{1}{(s+t-1)(s+t)}.
    \]
\end{lemma}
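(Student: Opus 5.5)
We write the difference as a single fraction and bound its numerator and denominator separately:
\[
\frac{1}{a}-\frac{1}{b}=\frac{b-a}{ab}.
\]
Since $a<b$ are integers, the numerator satisfies $b-a\geq 1$. Since $a<b\leq s+t$, we also have $a\leq s+t-1$ and $b\leq s+t$. All quantities are positive, so $ab\leq (s+t-1)(s+t)$.

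Combining the two estimates gives
\[
\frac{b-a}{ab}\geq\frac{1}{ab}\geq\frac{1}{(s+t-1)(s+t)}.
\]

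No step here is a real obstacle. The only point needing care is that the bound on $a$ is $s+t-1$ rather than $s+t$, which comes from $a<b\leq s+t$ together with integrality. The hypothesis $a\geq s$ is not actually needed.

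An alternative route is to write $b=a+d$ with $d\geq1$ and observe that $\frac{d}{a(a+d)}$ is minimized over the allowed range at $d=1$ and $a=s+t-1$. The direct two-line bound above avoids this and is what I would present.
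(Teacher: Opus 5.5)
Your proof is correct and is essentially the paper's argument. The paper also uses only $a\le b-1$ and $b\le s+t$: it writes $\frac{1}{a}-\frac{1}{b}\ge\frac{1}{b-1}-\frac{1}{b}=\frac{1}{(b-1)b}\ge\frac{1}{(s+t-1)(s+t)}$, whereas you bound the numerator $b-a\ge 1$ and the denominator $ab\le(s+t-1)(s+t)$ separately, which amounts to the same thing (and you are right that the lower endpoint $s$ plays no role).
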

\begin{proof}\
Let $a,b\in\{s,s+1,\ldots,s+t\}$ with $a<b$. Since $a\leq b-1$ and $b\leq s+t$, we have
    \[
    \frac{1}{a}-\frac{1}{b}\geq\frac{1}{b-1}-\frac{1}{b}=\frac{1}{(b-1)b}\geq\frac{1}{(s+t-1)(s+t)}.
    \]
This completes the proof.
\end{proof}

Now we prove Theorem~\ref{Theorem:Main}. We do so by constructing $r$-colorings of the following form
\[
A_2'\ \ A_3'\ \ \cdots\ \  A_r'\ \ A_1\ \ A_2''\ \ A_3''\ \ \cdots\ \ A_r''.
\]
In the above coloring, $A_1$, $A_2'$, $A_3'$, \ldots, $A_r'$, $A_2''$, $A_3''$, \ldots, $A_r''$ are $2r-1$ mutually disjoint discrete intervals listed in increasing order, and, for any given $r$, $A_i'$ and $A_i''$ get larger in size as $i\geq2$ increases. We color each $A_i$ with the $i$-th color where $A_i=A_i'\cup A_i''$ for $i\in\{2,3,\ldots,r\}$. We have two different bounds in Theorem~\ref{Theorem:Main} because, for $r\geq2$, we have $A_2'=\{1\}$ when $k=2$ and $A_2'=\{1,2\}$ when $k\geq3$. Consequently, we have $A_1=\{2^{r-1},2^{r-1}+1,\ldots,2^r-1\}$ for $k=2$, and $A_1=\{2^r-1,2^r,\ldots,(2^r-1)k-1\}$ for $k\geq3$.

We first prove Theorem \ref{Theorem:Main} for $k=2$.

\begin{thm1.1PartI}\label{Lemma:CommonCoefficients_K=2}
    For all $r\geq1$, we have $f_r(2)\geq4^r/2$.
\end{thm1.1PartI}
\begin{proof}
Let
\[
A_1=\{2^{r-1},2^{r-1}+1,\ldots,2^r-1\}
\]
and, for all $i\in\{2,3,\ldots,r\}$, let 
\[
A_i'=\{2^{i-2},2^{i-2}+1,\ldots,2^{i-1}-1\},
\]    
\[
A''_i=\{2^{i-2}2^{r},2^{i-2}2^{r}+1,\ldots,2^{i-1}2^{r}-1\},
\]
and $A_i=A_i'\cup A''_i$. Note that $A_1,A_2',A_2'',\ldots,A_r',A_r''$ are mutually disjoint and $A_1\cup A_2\cup\cdots\cup A_r=\{1,2,\ldots,2^{r-1}\cdot2^r-1\}$. Consider the coloring $\Delta:\{1,2,\ldots,2^{r-1}\cdot2^r-1\}\to\{1,2,\ldots,r\}$ such that $\Delta(a)=i$ for all $a\in A_i$ with $i\in\{1,2,\ldots,r\}$. We will show that $\Delta$ does not have a monochromatic solution to Equation~(\ref{Equation:Main}) for $k=2$. To do this, it suffices to show that, for all $i\in\{1,2,\ldots,r\}$, the set $A_i$ does not have a solution to Equation~(\ref{Equation:Main}) for $k=2$.

We first show that $A_1$ does not have a solution to Equation~(\ref{Equation:Main}) for $k=2$. Suppose, by way of contradiction, that $a,b_1,b_2\in A_1$ such that
\[
\frac{1}{a}=\frac{1}{b_1}+\frac{1}{b_2}.
\]
Then we have
\[
\frac{1}{a}>\frac{1}{2^r}+\frac{1}{2^r}=\frac{2}{2^r}=\frac{1}{2^{r-1}}
\]
and hence $a<2^{r-1}$, which is a contradiction.

Now we show that, for any $i\in\{2,3,\ldots,r\}$, the set $A_i$ does not have a solution to Equation~(\ref{Equation:Main}) for $k=2$. Suppose, by way of contradiction, that $b_1,b_2\in A_i$ such that
\[
\frac{1}{a}=\frac{1}{b_1}+\frac{1}{b_2}.
\]

{\bf Case 1}: $b_1,b_2\in A''_i$. Then we have
\[
\frac{1}{a}\leq\frac{2}{2^{i-2}2^r}=\frac{1}{2^{i-2}2^{r-1}}
\]
and
\[
\frac{1}{a}>\frac{2}{2^{i-1}2^r}=\frac{1}{2^{i-2}2^r}.
\]
It follows that $2^{r-1}\leq 2^{i-2}2^{r-1}\leq a<2^{i-2}2^r$, which is a contradiction.

{\bf Case 2}: $b_j\in A_i'$ for some $j\in\{1,2\}$. Since $a<b_1,b_2$, we have $a\in A_i'$. We note that $b_1$ and $b_2$ cannot both be in $A_i'$. To see this, suppose $b_1,b_1\in A_i'$, then
\[
\frac{1}{a}=\frac{1}{b_1}+\frac{1}{b_2}>\frac{1}{2^{i-1}}+\frac{1}{2^{i-1}}=\frac{1}{2^{i-2}}
\]
and hence $a<2^{i-2}$, which is a contradiction. Without loss of generality, we assume that $b_2\in A_i'$ and $b_1\in A_i''$. Then we have
\[
\frac{1}{a}-\frac{1}{b_2}=\frac{1}{b_1}\leq \frac{1}{2^{i-2}2^r},
\]
and, by Lemma~\ref{Lemma:Elementary},
\[
\frac{1}{a}-\frac{1}{b_2}\geq\frac{1}{(2^{i-1}-2)(2^{i-1}-1)}.
\]
It follows that $(2^{i-1}-2)(2^{i-1}-1)\geq2^{i-2}2^r$, which is a contradiction.
\end{proof}
Now we prove Theorem~\ref{Theorem:Main} for $k\geq3$.
\begin{thm1.1PartII}\label{Lemma:CommonCoefficients_k>=3}
    For all $k\geq3$ and $r\geq1$, we have $f_r(k)\geq(2^r-1)k^r$.
\end{thm1.1PartII}

\begin{proof}
    Let 
    \[
    A_1=\{2^r-1,2^r,\ldots,(2^r-1)k-1\}
    \]
    and, for all $i\in\{2,3,\ldots,r\}$, let
    \[
    A_i'=\{2^{i-1}-1,2^{i-1},\ldots,2^i-2\},
    \]
    \[
    A_i^{''}\{(2^r-1)k^{i-1}, (2^r-1)k^{i-1}+1,\ldots,(2^r-1)k^i-1\},
    \]
    and $A_i=A_i'\cup A_i^{''}$. Note that $A_1,A_2',A_2'',\ldots,A_r',A_r''$ are mutually disjoint and $A_1\cup A_2\cup\cdots\cup A_r=\{1,2,\ldots,(2^r-1)k^r-1\}$. Let $\Delta:\{1,2,\ldots,(2^r-1)k^r-1\}\to\{1,2,\ldots,r\}$ be an $r$-coloring such that $\Delta(a)=i$ for all $a\in A_i$ with $i\in\{1,2,\ldots,r\}$. We will show that $\Delta$ does not have a monochromatic solution to Equation~(\ref{Equation:Main}). To do this, it suffices to show that, for all $i\in\{1,2,\ldots,r\}$, the set $A_i$ does not have a solution to Equation~(\ref{Equation:Main}) .

    We first show that $A_1$ does not have a solution to Equation~(\ref{Equation:Main}). Suppose, by way of contradiction, $a,b_1,b_2,\ldots,b_k\in A_1$ such that
    \[
    \frac{1}{a}=\frac{1}{b_1}+\frac{1}{b_2}+\cdots+\frac{1}{b_k}.
    \]
    Then we have
    \[
    \frac{1}{a}>\frac{k}{(2^r-1)k}=\frac{1}{2^r-1}.
    \]
    Hence $a<2^r-1$, which is a contradiction.

    Now let $i\in\{2,3,\ldots,r\}$ and we will show that $A_i$ does not have a solution to Equation~(\ref{Equation:Main}). Suppose, by way of contradiction, $a,b_1,b_2,\ldots,b_k\in A_1$ such that
    \[
    \frac{1}{a}=\frac{1}{b_1}+\frac{1}{b_2}+\cdots+\frac{1}{b_k}.
    \]
    There are two cases depending on whether $b_j\in A_i'$ or $b_j\in A_i''$ for $j\in\{1,2,\ldots,k\}$.

   {\bf Case 1}: $b_1,b_2,\ldots,b_k\in A_i''$. Then we have
    \[
    \frac{1}{a}>\frac{k}{(2^r-1)k^i}=\frac{1}{(2^r-1)k^{i-1}}
    \]
    and
    \[
    \frac{1}{a}\leq\frac{k}{(2^r-1)k^{i-1}}=\frac{1}{(2^r-1)k^{i-2}}.
    \]
    It follows that $(2^r-1)k^{i-2}\leq a<(2^r-1)k^{i-1}$ and hence $a\in A_{i-1}$, which is a contradiction.

    {\bf Case 2}: There exists $j\in\{1,2,\ldots,k\}$ such that $b_j\in A_i'$. Without loss of generality, we assume that $b_k\in A_i'$. Notice that this forces $b_1,b_2,\ldots,b_{k-1}\in A_i''$. To see this, suppose that $b_{k-1}\in A_i'$. Then we have
    \[
    \frac{1}{a}-\frac{1}{b_{k-1}}-\frac{1}{b_k}\leq\frac{1}{2^{i-1}-1}-\frac{1}{2^i-2}-\frac{1}{2^i-2}=0,
    \]
    which is a contradiction because $k\geq3$.

    Now we have
    \[
    \frac{1}{a}-\frac{1}{b_k}=\frac{1}{b_1}+\frac{1}{b_2}+\cdots+\frac{1}{b_{k-1}}\leq\frac{k-1}{(2^r-1)k^{i-1}}=\frac{k-1}{k}\frac{1}{(2^r-1)k^{i-2}}
    \]
    and, by Lemma~\ref{Lemma:Elementary}, 
    \[
    \frac{1}{a}-\frac{1}{b_k}\geq\frac{1}{(2^i-3)(2^i-2)}.
    \]

    It follows that
    \begin{equation}\label{Inequality:1}
(2^i-3)(2^i-2)\geq\frac{k}{k-1}(2^r-1)k^{i-2}.  
\end{equation}
We still need to show that, for all $i\geq2$ and $k\geq3$, Inequality~(\ref{Inequality:1}) leads to a contradiction. Notice that, for any $i\geq2$, we have $r\geq i\geq2$. 

If $i=2$, then Inequality~(\ref{Inequality:1}) leads to $2\geq3$, which is a contradiction. 

If $i=3$, then Inequality~(\ref{Inequality:1}) becomes $30\geq7k^2/(k-1)$. This is a contradiction for $k\geq5$ because it leads to $30\geq35$. For $k=3$, we have $30\geq63/2$, which is a contradiction; and for $k=4$, we have $30\geq112/3$, which is again a contradiction.

If $i=4$, then Inequality~(\ref{Inequality:1}) becomes $13\cdot14\geq15k^3/(k-1)$. This is a contradiction for $k\geq4$ because it leads to $13\cdot14\geq15\cdot16$. For $k=3$, we have $13\cdot14\geq15\cdot27/2$, which is a contradiction.

Now suppose $i\geq5$. For $k\geq4$, we have $k^{i-2}\geq4^{i-2}\geq 2^i>2^i-3$ and hence Inequality~(\ref{Inequality:1}) implies
$
(2^i-3)(2^i-2)>(2^i-1)(2^i-3),
$
which is a contradiction; and for $k=3$, by Inequality~(\ref{Inequality:1}), we have
    \[
    (2^i-3)(2^i-2)\geq\frac{3}{2}(2^i-1)3^{i-2}=(2^i-1)2^i\frac{1}{4}\left(\frac{3}{2}\right)^{i-1}\geq(2^i-1)2^i\frac{1}{4}\left(\frac{3}{2}\right)^{5-1}>(2^i-1)2^i
    \]
    and hence $(2^i-3)(2^i-2)>(2^i-1)2^i$, which is again a contradiction.
\end{proof}

\section{$2$-Colorings}\label{Section:Computation2Coloring}
In this section, we first present some computational results for $f_2(k)$. Our computation is conducted using Python, and the code is developed with the assistance of ChatGPT 5.5. The Python script is available from the first author's website. We use Boolean satisfiability (SAT) solvers \texttt{Cadical195} and \texttt{Glucose3} in the \texttt{PySAT} toolkit for the computation \cite{ITK2024}. The CPU of the computer we use is AMD Ryzen 7 9800X3D. For $k\geq14$, we also use ChatGPT Codex to help with the calculation.

\begin{table}[H]
\renewcommand{\arraystretch}{1.5}
\centering
\begin{tabular}{|c|c|c|c|c|c|c|c|c|c|c|c|c|}
\hline
$k$&2&4&6&8&10&12&14&16&18&20&22&24\\\hline
$f_2(k)$&60&48&108&192&300&432&588&768&972&1200&1452&1728\\\hline
$f_2(k)-3k^2$&48&0&0&0&0&0&0&0&0&0&0&0\\\hline
\end{tabular}
\caption{Computational Results for $f_2(k)$, $2\leq k\leq 24$ even}\label{Table1}
\end{table}

\begin{table}[H]
\renewcommand{\arraystretch}{1.5}
\centering
\begin{tabular}{|c|c|c|c|c|c|c|c|c|c|c|c|c|}
\hline
$k$&3&5&7&9&11&13&15&17&19&21&23&25\\\hline
$f_2(k)$&40&80&150&252&372&513&675&880&1092&1323&1608&1887\\\hline
$f_2(k)-3k^2$&13&5&3&9&9&6&0&13&9&0&21&12\\\hline
\end{tabular}
\caption{Computational Results for $f_2(k)$, $3\leq k\leq 25$ odd}\label{Table2}
\end{table}

By Theorem \ref{Theorem:Main}, we have $f_2(k)\geq3k^2$ for all $k\geq3$. Tables \ref{Table1} and \ref{Table2} contain our computational results for $k\leq 25$ along with the difference between $f_2(k)$ and $3k^2$. Myers and Parrish \cite{MyersParrish2018} calculated $f_2(2)$, $f_2(3)$, and $f_2(4)$, but all other values are computed by us for the first time. We note that Myers and Parrish \cite[Computation 21 on Page 12]{MyersParrish2018} also claimed that $f_2(5)=39$ which is impossible because, by Theorem~\ref{Theorem:Main}, we have $f_2(5)\geq3\cdot5^2=75$. 

Tables \ref{Table1} and \ref{Table2} suggest that if $k$ is an odd prime power, then $f_2(k)\geq3k^2+1$; otherwise, $f_2(k)=3k^2$ for $k\geq4$. We first prove the former and then prove a special case for the latter.

\begin{theorem}\label{Theorem:2-ColorPrimePowers}
    If $k=p^m$ for some odd prime number $p$ and positive integer $m$, then $f_2(k)\geq3k^2+1$.
\end{theorem}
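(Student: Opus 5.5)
To prove Theorem~\ref{Theorem:2-ColorPrimePowers} we need a $2$-coloring of $\{1,2,\ldots,3k^2\}$ with no monochromatic solution to Equation~(\ref{Equation:Main}). I would modify the $r=2$ coloring from Theorem~1.1(ii), which uses $A_1=\{3,\ldots,3k-1\}$ and $A_2=\{1,2\}\cup\{3k,\ldots,3k^2-1\}$.

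\textbf{Why the naive extensions fail.} Giving $3k^2$ the second color fails at once, because $\frac{1}{3k}=k\cdot\frac{1}{3k^2}$. So the whole block structure has to shift by one. I would take
\[
B_1=\{4,5,\ldots,3k\},\qquad B_2=\{1,2,3\}\cup\{3k+1,\ldots,3k^2\}.
\]

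\textbf{Routine checks.} For $B_1$: any $k$ terms give a sum $\sum 1/b_j\geq k/(3k)=1/3>1/4\geq 1/a$, so $B_1$ has no solution.

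For $B_2$ with all $b_j\geq 3k+1$: we get $k/(3k^2)\leq 1/a<k/(3k+1)$. This forces $3<a\leq 3k$, and no element of $B_2$ lies in that range.

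So at least one $b_j$ lies in $\{1,2,3\}$. Then $a<b_j$ gives $a\in\{1,2\}$. Enumerating the possible multisets of small $b_j$'s, and bounding the remaining large terms by $(k-t)/(3k+1)<1/3$, disposes of every case except the following two:
\begin{itemize}
\item $a=1$, small terms $\frac12+\frac13$, leaving $\frac16$ as a sum of $k-2$ reciprocals from $[3k+1,3k^2]$;
\item $a=2$, small term $\frac13$, leaving $\frac16$ as a sum of $k-1$ such reciprocals.
\end{itemize}
The size bounds do allow these representations, since $j/(3k^2)\leq 1/6\leq j/(3k+1)$. So the whole theorem reduces to a number-theoretic lemma.

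\textbf{Key lemma.} If $k=p^m$ with $p$ an odd prime and $j\in\{k-2,k-1\}$, then $\frac16$ is not a sum of $j$ unit fractions with denominators in $[3k+1,3k^2]$.

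\textbf{Main obstacle.} This lemma is the step I expect to be hardest, and it is where the odd prime power hypothesis must enter.

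\textbf{Plan for the lemma.} I would first try a $p$-adic argument. Let $e$ be the maximal $p$-adic valuation among the denominators $b_i$. Since $p\nmid 6$, the terms with $v_p(b_i)=e$ must cancel $p$-adically. Writing $b_i=p^e c_i$ with $p\nmid c_i$, this means $\sum_{v_p(b_i)=e}1/c_i\equiv 0 \pmod p$ after clearing denominators. The constraint $b_i\leq 3k^2=3p^{2m}$ bounds $c_i\leq 3p^{2m-e}$.

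Combined with the averaging constraint, this should leave very little room. The $b_i$ have harmonic mean $6j\approx 6k$ and lie in $[3k+1,3k^2]$, and the count $j$ is close to $k$, so any deviation below $6k$ must be compensated by very large denominators. I would try to show that most of the $b_i$ must equal $6k$ or be near it. Since $6k=6p^m$ has valuation exactly $m$, the cancellation modulo $p$ then fails, because it would require a number of such terms that is a multiple of $p$, which is incompatible with $j\equiv -1,-2\pmod p$.

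If this clean approach breaks down, the fallback is to refine the $B_1/B_2$ boundary further, for instance by moving a few more elements near $3k$, and to redo the case analysis. I expect the prime power structure to be used in essentially this divisibility form.
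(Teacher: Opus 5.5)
\textbf{The proposal has a fatal gap: your coloring is not solution-free, and your key lemma is false for every $k\ge 3$.} Take $a=2$, $b_1=3$ and $b_2=\cdots=b_k=6(k-1)$. Then $\frac12=\frac13+(k-1)\cdot\frac{1}{6(k-1)}$, and $3k+1\le 6(k-1)\le 3k^2$ holds for all $k\ge3$, so this is a monochromatic solution in $B_2$. Likewise $1=\frac12+\frac13+(k-2)\cdot\frac{1}{6(k-2)}$ is monochromatic in $B_2$ for all $k\ge5$. For $k=3$ even your enumeration step fails, because $1=\frac13+\frac13+\frac13$ lies entirely in $B_2$; indeed $1$ and $k$ must get different colors. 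Your heuristic confuses the harmonic mean $6j$ with $6k$. The trivial representation $\frac16=j\cdot\frac1{6j}$ uses denominators $6(k-1)$ or $6(k-2)$, which are coprime to $p$ because $p$ is odd. So there is no $p$-adic obstruction to exploit, and no valuation argument can rescue this coloring.

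\textbf{What is missing is the other naive extension, which you never checked.} You correctly ruled out putting $3k^2$ in the class containing $3k$, but then shifted the blocks. The paper instead keeps $R=\{1,2\}\cup\{3k,\ldots,3k^2-1\}$ and puts $3k^2$ into the other class, $B=\{3,\ldots,3k-1\}\cup\{3k^2\}$. The class $R$ is solution-free by Theorem 1.1(ii). Any solution in $B$ must use $3k^2$ as a denominator some $\ell$ times. Theorem 1.1(ii) excludes $\ell=0$, and $\ell=k$ would force $a=3k\notin B$, so $1\le\ell\le k-1$. All remaining entries, including $a$, are less than $3k\le p^{m+1}$, so their $p$-adic valuations are at most $m$, while the valuation of $3k^2$ is at least $2m$. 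Let $L$ be the least common multiple of the denominators. Then $p^m$ divides $L/a$ and each $L/b_i$ with $b_i<3k$, and $p\nmid L/(3k^2)$. Clearing denominators therefore gives $p^m\mid \ell$, which is impossible since $0<\ell<k=p^m$. This is exactly where the hypotheses enter. Oddness of $p$ gives $3k\le p^{m+1}$. The prime-power condition makes $p^m\mid\ell$ impossible for $0<\ell<k$.
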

\begin{proof}
Let $k=p^m$ where $p$ is an odd prime number and $m$ is a positive integer. Consider the coloring $\Delta:\{1,2,\ldots,3k^2\}\to\{R,B\}$ such that $\Delta(a)=R$ for all $a\in \{1,2\}\cup\{3k,3k+1,\ldots,3k^2-1\}$ and $\Delta(b)=B$ for all $b\in \{3,4,\ldots,3k-1\}\cup\{3k^2\}$. If suffices to show that $\Delta$ does not have a monochromatic solution to Equation~(\ref{Equation:Main}). By the proof of Theorem~\ref{Theorem:Main}, $\Delta$ does not have a solution to Equation~(\ref{Equation:Main}) with the color $R$. It remains to show that $\Delta$ does not have a solution to Equation~(\ref{Equation:Main}) with color $B$. Suppose, by way of contradiction, that $a,b_1,b_2,\ldots,b_k\in\{3,4,\ldots,3k-1\}\cup\{3k^2\}$ with $b_1\geq b_2\geq\cdots\geq b_k>a$ such that
\[
\frac{1}{a}=\frac{1}{b_1}+\frac{1}{b_2}+\cdots+\frac{1}{b_k}.
\]

By the proof of Theorem~\ref{Theorem:Main}, we must have $b_1=3k^2$. Let $\ell$ be the largest integer in $\{1,2,\ldots,k\}$ such that $b_1=b_2=\cdots=b_\ell=3k^2$. We first notice that $\ell\neq k$ because otherwise
\[
\frac{1}{a}=\frac{k}{3k^2}=\frac{1}{3k}
\]
and hence $a=3k$, which is a contradiction.

Since $a,b_{\ell+1},b_{\ell+2},\ldots,b_{k}<3k$ and $p^{m+1}\geq 3p^m=3k$, we see that $p^{m+1}$ does not divide any integer in $\{a,b_{\ell+1},b_{\ell+2},\ldots,b_{k}\}$. Let $L$ be the least common multiple of the set $\{3k^2,b_{\ell+1},b_{\ell+2},\ldots,b_k\}$. Notice that $p^{2m}=k^2$ divides $L$ and $p$ does not divide $L/(3k^2)$. Now we have
\[
\frac{1}{a}=\underbrace{\frac{1}{3k^2}+\frac{1}{3k^2}+\cdots+\frac{1}{3k^2}}_{\ell\text{ times}}+\frac{1}{b_{\ell+1}}+\frac{1}{b_{\ell+2}}+\cdots+\frac{1}{b_k}
\]
and hence
\[
\frac{1}{a}=\frac{\ell L/(3k^2)+L/b_{\ell+1}+L/b_{\ell+2}+\cdots+L/b_k}{L}.
\]
Since $p^{m+1}$ does not divide any integer in $\{b_{\ell+1},b_{\ell+2},\ldots,b_{k}\}$ and $p^{2m}$ divides $L$, we see that $p^m$ divides $L/b_{\ell+1}+L/b_{\ell+2}+\cdots+L/b_k$. Since $p^{m+1}$ does not divide $a$, we must have that $p^m$ divides $\ell L/(3k^2)$. Since $p$ does not divide $L/(3k^2)$, we see that $k=p^m$ divides $\ell$, which is a contradiction.
\end{proof}
\begin{remark}
In the proof of Theorem~\ref{Theorem:2-ColorPrimePowers}, we showed that if $k$ is an odd prime power, then the 2-coloring of $\{1,2,\ldots,3k^2\}$, where the integers in $\{1,2\}\cup\{3k,3k+1,\ldots,3k^2-1\}$ are colored with one color and the integers in $\{3,4,\ldots,3k-1\}\cup\{3k^2\}$ are colored with the other color, does not have a monochromatic solution to Equation~(\ref{Equation:Main}). For certain values of $k$, it is possible to add more values greater than $3k^2-1$ to the second color class and the 2-coloring still has no monochromatic solution to Equation~(\ref{Equation:Main}). For example, there is a 2-coloring of $\{1,2,\ldots,34\}$ that does not have a monochromatic solution to Equation~(\ref{Equation:Main}) for $k=3$: we can color the integers in $\{1,2\}\cup\{9,10,\ldots,26\}$ red and color the integers in $\{3,4,5,6,7,8\}\cup\{27,28,\ldots,34\}$ blue. Another example is a 2-coloring of $\{1,2,\ldots,76\}$ that does not have a monochromatic solution to Equation~(\ref{Equation:Main}) for $k=5$: we can color the integers in $\{1,2\}\cup\{15,16,\ldots,74\}$ red and color the integers in $\{3,4,\ldots,14\}\cup\{75,76\}$ blue.
\end{remark}

Now we prove that $f_2(k)=3k^2$ for an infinite family of $k$.
\begin{theorem}
    If $k=3\cdot2^m$ for some positive integer $m$, then $f_2(k)=3k^2$.
\end{theorem}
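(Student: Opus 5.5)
The lower bound $f_2(k)\geq 3k^2$ is already given by Theorem~\ref{Theorem:Main}, since $k\geq 6\geq3$. It therefore remains to prove the upper bound: every $2$-coloring $\Delta$ of $\{1,2,\ldots,3k^2\}$ has a monochromatic solution to Equation~(\ref{Equation:Main}) when $k=3\cdot2^m$. I would argue by contradiction. Assume $\Delta$ uses the colors $R$ and $B$ and has no monochromatic solution, and force the colors of a small set of integers until some solution becomes monochromatic.

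The basic forcing tool is the family of \emph{two-value} solutions
\[
\frac{1}{a}=\frac{k_1}{x}+\frac{k_2}{y},\qquad k_1+k_2=k,
\]
in which $x_1,\ldots,x_{k_1}$ all equal $x$ and the remaining $x_j$ all equal $y$. The trivial case $k_2=0$ gives $1/a=k/(ka)$. So $a$ and $ka$ receive different colors whenever $ka\leq 3k^2$, i.e.\ whenever $a\leq 3k$. Without loss of generality $\Delta(1)=R$. Then
\[
\Delta(k)=B,\qquad \Delta(k^2)=R,
\]
and similarly $\Delta(3)\neq\Delta(3k)\neq\Delta(3k^2)$, and $\Delta(2)\neq\Delta(2k)\neq\Delta(2k^2)$.

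To link these chains I will use the divisibility $k=3\cdot 2^m$. It makes $k/3$, $k/2$, $2k/3$ and $k/4$ (for $m\geq2$) integers, which yields mixed solutions such as:
\begin{itemize}
\item $1/(2a)=\tfrac{k}{2}\cdot\tfrac{1}{2ka}\cdot 2$ rewritten with halves. Concretely, $\tfrac{k}{2}\cdot\tfrac{1}{ka}+\tfrac{k}{2}\cdot\tfrac{1}{3ka}\cdot 3$ types.
\item More usefully, $\tfrac{1}{a}=\tfrac{k/3}{x}+\tfrac{2k/3}{y}$ with $x=\tfrac{ka}{3}\cdot\lambda$. For example, $\tfrac{1}{k}=\tfrac{k/3}{k^2/2}+\tfrac{2k/3}{2k^2}$ (check: $\tfrac{2}{3k}+\tfrac{1}{3k}=\tfrac1k$), and $\tfrac{1}{3k}=\tfrac{k/2}{2k^2}+\tfrac{k/2}{6k^2}$ is too large, so variants must stay below $3k^2$.
\end{itemize}

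I would systematically list all two- and three-value solutions whose entries lie among
\[
\{1,2,3,4,6,\;k/2,\;k,\;3k/2,\;2k,\;3k,\;k^2/2,\;k^2,\;3k^2/2,\;2k^2,\;3k^2\}
\]
and their obvious relatives. I would then run the forcing argument as a case analysis on $\Delta(2)$ and $\Delta(3)$, using each listed identity to propagate colors. The cases $\Delta(k^2/2)$ and $\Delta(2k^2)$ follow from $\Delta(k)=B$ via the identity above.

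The main obstacle is the closing step: finding a single identity all of whose entries have already been forced to the same color. The example of odd prime powers in Theorem~\ref{Theorem:2-ColorPrimePowers} shows this must genuinely use the factor $3$ and the powers of $2$ in $k$. The coloring with $B$ on $\{3,\ldots,3k-1\}\cup\{3k^2\}$ has to be killed by a blue solution of the form
\[
\frac{1}{a}=\frac{\ell}{3k^2}+\sum_{j>\ell}\frac{1}{b_j},\qquad b_j<3k,
\]
with $k\nmid\ell$. The first thing I would try is exactly this: for $k=3\cdot2^m$, exhibit explicit $a,b_j\in\{3,\ldots,3k-1\}$ realizing it, for instance with $b_j\in\{3k/2,\;2k,\;3k-\cdot\}$ and $\ell$ a multiple of $k/3$ or $k/2$. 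I would then show that every other branch of the case analysis, where $\Delta(2)$ or $\Delta(3)$ deviates from that coloring, is already contradicted by the short chains $a\to ka\to k^2a$ and the $(k/3,2k/3)$ and $(k/2,k/2)$ mixed identities. I expect this step to need a small computer-guided search for the identities, followed by a hand verification valid for all $m$.
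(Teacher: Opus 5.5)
Your reduction to the upper bound is correct. Your idea for killing the coloring of Theorem~\ref{Theorem:2-ColorPrimePowers} is essentially the paper's Case~1: if $\Delta(2)\neq\Delta(3)$, the chains give $\Delta(2k)=\Delta(3k^2)=\Delta(3)$, the identity $\frac12=\frac{k/2}{3k/2}+\frac{k/2}{3k}$ gives $\Delta(3k/2)=\Delta(3)$, and then $\frac13=\frac{k/2-1}{3k/2}+\frac{1}{2k}+\frac{k/2}{3k^2}$ is monochromatic. Beyond that, what you have is a search plan, not a proof: no case analysis is carried out, and no closing identity is given for any other branch.

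\textbf{The claimed reduction of the other branches is false.} You say every other branch is ``already contradicted by the short chains and the $(k/3,2k/3)$ and $(k/2,k/2)$ mixed identities'' on your list $S=\{1,2,3,4,6,k/2,\dots,3k^2\}$. For $k=3\cdot2^m\geq12$, color $\{1,2,3,4,6,k/2,k^2,3k^2/2,2k^2,3k^2\}$ red and $\{k,3k/2,2k,3k,k^2/2\}$ blue. Every relation $\Delta(a)\neq\Delta(ka)$ inside $S$ holds, yet no solution with all entries in $S$ is monochromatic:
\begin{itemize}
\item Blue: blue numbers are at most $k^2/2$, so $k$ blue terms sum to at least $2/k$. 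But any blue $a$ with larger blue elements satisfies $a\geq k$, so $1/a\leq 1/k$.
\item Red with $a\geq k^2$: impossible, since $k/(3k^2)>1/k^2$.
\item Red with $a\leq k/2$: the contribution of the big red terms equals $\frac1a$ minus reciprocals of small red numbers. So it is an integer multiple of $1/L$, where $L=\mathrm{lcm}(12,k/2)\leq k$. It is also less than $1/k$ unless all $k$ terms equal $k^2$, which would force $a=k$. Hence it is $0$, and the $k$ small terms then sum to at least $2$.
\end{itemize}
This coloring lies in the paper's Case~3 branch, $\Delta(1)=\Delta(2)=\Delta(3)=\Delta(4)$.

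\textbf{What is missing.} In that branch, and in Case~4 ($\Delta(1)\neq\Delta(2)=\Delta(3)$), you need entries outside $S$ and an induction on $m$; this is where $k=3\cdot2^m$ is really used. The paper propagates the common color of $2^j,3\cdot2^{j-1},2^{j+1}$ to $3\cdot2^j$ and $2^{j+2}$ using
$\frac{1}{2^j}=\frac{1}{3\cdot2^{j-1}}+\frac{k-1}{3\cdot2^j(k-1)}$,
$\frac{1}{3\cdot2^{j-1}}=\frac{1}{2^{j+1}}+\frac{k-1}{3\cdot2^{j+1}(k-1)}$,
$\frac{1}{3\cdot2^j}=\frac{k-2}{3\cdot2^j(k-1)}+\frac{2}{3\cdot2^{j+1}(k-1)}$,
and the analogous identities with denominators involving $k-2$. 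Since $k=3\cdot2^m$ lies on this chain, one gets $\Delta(k)=\Delta(1)$, contradicting $1=k\cdot\frac1k$. Case~4 runs the same induction from $2,3,4$ and closes with identities involving $k(k-3)$ and $2k(k-3)$.

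\textbf{Smaller issues.} You do not treat small $m$: the paper's identities need $k\geq12$ (e.g.\ $k/3-4$ copies), and $k=6$ is settled by computer. Also, the single identity $\frac1k=\frac{k/3}{k^2/2}+\frac{2k/3}{2k^2}$ with $\Delta(k)=B$ only shows that $k^2/2$ and $2k^2$ are not both blue. It does not determine their colors, as you claim.
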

\begin{proof}
    Let $k=3\cdot2^m$ for some positive integer $m$. By Theorem~\ref{Theorem:Main}, we have $f_2(k)\geq3k^2$. It remains to show that $f_2(k)\leq 3k^2$. By Table~\ref{Table1}, this is true for $k=6$. Hence, we assume that $k\geq12$. Notice that we then have $m\geq2$.

Let $\Delta:\{1,2,\ldots,3k^2\}\to\{R,B\}$ be a $2$-coloring. Suppose, by way of contradiction, that $\Delta$ does not have a monochromatic solution to Equation~(\ref{Equation:Main}). First, notice that, for any integer $a$, if $a,ak\in\{1,2,\ldots,3k^2\}$, then $a$ and $ak$ have different colors because
\[
\frac{1}{a}=\underbrace{\frac{1}{ak}+\frac{1}{ak}+\cdots+\frac{1}{ak}}_{k\text{ times}}.
\]
We divide the rest of the proof into four cases. Note that every 2-colorings of $\{1,2,\ldots,3k^2\}$ belongs to one of those cases.

\textbf{Case 1}: $\Delta(2)\neq\Delta(3)$. Without loss of generality, we assume that $\Delta(2)=R$ and $\Delta(3)=B$. Then we have
    \[
    \Delta(2)=\Delta(3k)=R
    \]
    and
    \[
    \Delta(3)=\Delta(2k)=\Delta(3k^2)=B.
    \]
Since
    \[
    \frac{1}{2}=\underbrace{\frac{1}{3k/2}+\frac{1}{3k/2}+\cdots+\frac{1}{3k/2}}_{k/2\text{ times}}+\underbrace{\frac{1}{3k}+\frac{1}{3k}+\cdots+\frac{1}{3k}}_{k/2\text{ times}},
    \]
    we have $\Delta(3k/2)=B$. Now since
\[\frac{1}{3}=\underbrace{\frac{1}{3k/2}+\frac{1}{3k/2}+\cdots+\frac{1}{3k/2}}_{k/2-1\text{ times}}+\frac{1}{2k}+\underbrace{\frac{1}{3k^2}+\frac{1}{3k^2}+\cdots+\frac{1}{3k^2}}_{k/2\text{ times}},\]
we have a monochromatic solution, which is a contradiction.

\textbf{Case 2:} $\Delta(1)=\Delta(2)=\Delta(3)\neq\Delta(4)$. Without loss of generality, assume that $\Delta(1)=\Delta(2)=\Delta(3)=R$ and $\Delta(4)=B$. Since

\[
\frac{1}{1}=\frac{1}{3}+\frac{1}{3}+\underbrace{\frac{1}{3(k-2)}+\frac{1}{3(k-2)}+\cdots+\frac{1}{3(k-2)}}_{k-2\text{ times}},
\]

and
\[
\frac{1}{1}=\frac{1}{2}+\frac{1}{3}+\underbrace{\frac{1}{6(k-2)}+\frac{1}{6(k-2)}+\cdots+\frac{1}{6(k-2)}}_{k-2\text{ times}},
\]
we have $\Delta(3(k-2))=\Delta(6(k-2))=B$. Since
\[
\frac{1}{4}=\underbrace{\frac{1}{3(k-2)}+\frac{1}{3(k-2)}+\cdots+\frac{1}{3(k-2)}}_{k/2-3\text{ times}}+\underbrace{\frac{1}{6(k-2)}+\frac{1}{6(k-2)}+\cdots+\frac{1}{6(k-2)}}_{k/2+3\text{ times}},
\]
we have a contradiction.

\textbf{Case 3}: $\Delta(1)=\Delta(2)=\Delta(3)=\Delta(4)$. Without loss of generality, assume that $\Delta(1)=\Delta(2)=\Delta(3)=\Delta(4)=R$. Let $A_0=\{1,2\}$ and, for all positive integers $i$, let $A_i=\{3\cdot2^{i-1},2^{i+1}\}$. Notice that we have $A_0=\{1,2\}$ and $A_1=\{3,4\}$, and $\Delta(a)=R$ for all $a\in A_0\cup A_1$. Since $k=3\cdot2^m$, we have $k\in A_{m+1}$. We prove, by induction on $i$, that $\Delta(a)=R$ for all $a\in A_{m+1}$. Suppose that, for some $j\in\{1,2,\ldots,m\}$, we have $\Delta(a)=R$ for all $a\in A_i$ with $i\leq j$. Then we have $\Delta(2^j)=\Delta(3\cdot2^{j-1})=\Delta(2^{j+1})=R$. We need to prove that $\Delta(a)=R$ for all $a\in A_{j+1}$; that is, we need to prove that $\Delta(3\cdot2^j)=\Delta(2^{j+2})=R$.

Since
\begin{equation}\label{Identity:1}
    \frac{1}{2^j}=\frac{1}{3\cdot2^{j-1}}+\underbrace{\frac{1}{3\cdot2^{j}(k-1)}+\frac{1}{3\cdot2^{j}(k-1)}+\cdots+\frac{1}{3\cdot2^{j}(k-1)}}_{k-1\text{ times}},
\end{equation}
and
\begin{equation}\label{Identity:2}
    \frac{1}{3\cdot2^{j-1}}=\frac{1}{2^{j+1}}+\underbrace{\frac{1}{3\cdot2^{j+1}(k-1)}+\frac{1}{3\cdot2^{j+1}(k-1)}+\ldots+\frac{1}{3\cdot2^{j+1}(k-1)}}_{k-1\text{ times}}
\end{equation}
we have $\Delta(3\cdot2^{j}(k-1))=\Delta(3\cdot2^{j+1}(k-1))=B$. Since
\begin{equation}\label{Identity:3}
    \frac{1}{3\cdot2^j}=\underbrace{\frac{1}{3\cdot2^j(k-1)}+\frac{1}{3\cdot2^j(k-1)}+\cdots+\frac{1}{3\cdot2^j(k-1)}}_{k-2\text{ times}}+\frac{1}{3\cdot2^{j+1}(k-1)}+\frac{1}{3\cdot2^{j+1}(k-1)},
\end{equation}
we have $\Delta(3\cdot2^j)=R$. 

Since
\begin{equation}\label{Identity:4}
\frac{1}{2^j}=\frac{1}{3\cdot2^{j}}+\frac{1}{3\cdot2^{j}}+\underbrace{\frac{1}{3\cdot2^{j}(k-2)}+\frac{1}{3\cdot2^{j}(k-2)}+\cdots+\frac{1}{3\cdot2^{j}(k-2)}}_{k-2\text{ times}},
\end{equation}
\begin{equation}\label{Identity:5}
    \frac{1}{2^{j}}=\frac{1}{2^{j+1}}+\frac{1}{3\cdot2^{j}}+\underbrace{\frac{1}{3\cdot2^{j+1}(k-2)}+\frac{1}{3\cdot2^{j+1}(k-2)}+\cdots+\frac{1}{3\cdot2^{j+1}(k-2)}}_{k-2\text{ times}},
\end{equation}
we have $\Delta(3\cdot2^{j}(k-2))=\Delta(3\cdot2^{j+1}(k-2))=B$. Since
\begin{equation}\label{Identity:6}
    \begin{split}
    \frac{1}{2^{j+2}}=&\underbrace{\frac{1}{3\cdot2^{j}(k-2)}+\frac{1}{3\cdot2^{j}(k-2)}+\cdots+\frac{1}{3\cdot2^{j}(k-2)}}_{k/2-3\text{ times}}\\&+\underbrace{\frac{1}{3\cdot2^{j+1}(k-2)}+\frac{1}{3\cdot2^{j+1}(k-2)}+\cdots+\frac{1}{3\cdot2^{j+1}(k-2)}}_{k/2+3\text{ times}},
\end{split}
\end{equation}
we have $\Delta(2^{j+2})=R$.

The largest integer used in Identities~(\ref{Identity:1}-\ref{Identity:6}) is
\[
3\cdot2^{j+1}(k-1)\leq 3\cdot2^{m+1}(k-1)=2k(k-1)\leq 3k^2
\]
and hence all the integers we used are in $\{1,2,\ldots,3k^2\}$. By induction, we have $\Delta(a)=R$ for $a\in A_{m+1}$. Since $k\in A_{m+1}$, we have $\Delta(k)=R$. This is a contradiction because
\[
\frac{1}{1}=\underbrace{\frac{1}{k}+\frac{1}{k}+\cdots+\frac{1}{k}}_{k\text{ times}}.
\]

\textbf{Case 4}: $\Delta(1)\neq\Delta(2)=\Delta(3)$. Without loss of generality, we assume that $\Delta(1)=R$ and $\Delta(2)=\Delta(3)=B$. Then we have
\[
\Delta(1)=\Delta(2k)=\Delta(3k)=R
\]
and
\[
\Delta(2)=\Delta(3)=\Delta(k)=\Delta(2k^2)=\Delta(3k^2)=B.
\]
Since
\[
\frac{1}{2}=\underbrace{\frac{1}{k}+\frac{1}{k}+\cdots+\frac{1}{k}}_{k/3\text{ times}}+\underbrace{\frac{1}{4k}+\frac{1}{4k}+\cdots+\frac{1}{4k}}_{2k/3\text{ times}},
\]
we have $\Delta(4k)=R$ and hence $\Delta(4)=B$.

Let $C_0=\{2\}$ and, for all positive integers $i$, let $C_i=\{3\cdot2^{i-1},2^{i+1}\}$. Notice that $C_1=\{3,4\}$. We prove, by induction on $i$, that $\Delta(c)=B$ for all $c\in C_i$ with $i\in\{0,1,2,\ldots,m+1\}$. This is true for $i\in\{0,1\}$. Suppose that, for some $j\in\{1,2,\ldots,m\}$, we have $\Delta(c)=B$ for all $c\in C_i$ with $i\leq j$. Then we have $\Delta(2^j)=\Delta(3\cdot2^{j-1})=\Delta(2^{j+1})=B$. We need to prove that $\Delta(c)=B$ for all $c\in C_{j+1}$; that is, we need to prove that $\Delta(3\cdot2^{j})=\Delta(2^{j+2})=B$. 

By Identities~(\ref{Identity:1}) and (\ref{Identity:2}), we have $\Delta(3\cdot2^{j}(k-1))=\Delta(3\cdot2^{j+1}(k-1))= R$, and hence, by Identity~(\ref{Identity:3}), we have $\Delta(3\cdot2^{j})=B$. Similarly, by Identities~(\ref{Identity:4}) and (\ref{Identity:5}), we have $\Delta(3\cdot2^{j}(k-2))=\Delta(3\cdot2^{j+1}(k-2))=R$, and hence, by Identity~(\ref{Identity:6}), we have we have $\Delta(2^{j+2})=B$. Similar to Case 3, the largest integer used in the induction step is
\[
3\cdot2^{j+1}(k-1)\leq 3\cdot2^{m+1}(k-1)=2k(k-1)\leq 3k^2.
\]
By induction, we have $\Delta(c)=B$ for all $c\in C_i$ with $i\in\{0,1,2,\ldots,m+1\}$. Since $k=3\cdot2^m$ with $m\geq2$, we have $k/6=2^{m-1}\in C_{m-2}$, $k/3=2^m\in C_{m-1}$, $k/2=3\cdot 2^{m-1}\in C_{m}$, $2k/3=2^{m+1}\in C_m$, and $4k/3=2^{m+2}\in C_{m+1}$. Hence $\Delta(k/6)=\Delta(k/3)=\Delta(k/2)=\Delta(2k/3)=\Delta(4k/3)=B$. Since
\[
\frac{1}{k/6}=\frac{1}{k/2}+\frac{1}{2k/3}+\frac{1}{2k/3}+\underbrace{\frac{1}{k(k-3)}+\frac{1}{k(k-3)}+\cdots+\frac{1}{k(k-3)}}_{k-3\text{ times}},
\]
and
\[
\frac{1}{k/3}=\frac{1}{k}+\frac{1}{4k/3}+\frac{1}{4k/3}+\underbrace{\frac{1}{2k(k-3)}+\frac{1}{2k(k-3)}+\cdots+\frac{1}{2k(k-3)}}_{k-3\text{ times}},
\]
we have $\Delta(k(k-3))=\Delta(2k(k-3))=R$. Since
\[
\frac{1}{3k/2}=\underbrace{\frac{1}{k(k-3)}+\frac{1}{k(k-3)}+\cdots+\frac{1}{k(k-3)}}_{k/3-4\text{ times}}+\underbrace{\frac{1}{2k(k-3)}+\frac{1}{2k(k-3)}+\cdots+\frac{1}{2k(k-3)}}_{2k/3+4\text{ times}},
\]
we have $\Delta(3k/2)=B$. But then since
    \[
    \frac{1}{k}=\frac{1}{3k/2}+\frac{1}{2k^2}+\frac{1}{2k^2}+\underbrace{\frac{1}{3k^2}+\frac{1}{3k^2}+\cdots+\frac{1}{3k^2}}_{k-3\text{ times}},
    \]
    we have a monochromatic solution, which is a contradiction.
\end{proof}

\begin{remark}
    The first author \cite{Gaiser2024} proved that $f_2(k)\leq 6k(k+1)(k+2)$ for all $k\geq2$ using the following fact: if every $2$-coloring of a finite set $A$ has a monochromatic solution to Equation~(\ref{Equation:Linear}), then every $2$-coloring of $\{1,2,\ldots,L\}$ has a monochromatic solution to Equation~(\ref{Equation:Main}), where $L$ is the least common multiple of the set $A$. This fact is a variant of a result of Brown and R\"{o}dl \cite{BrownRodl1991}. However, this fact cannot be used directly to prove that $f_2(k)=3k^2$ for certain values of $k$. For example, we know that $f_2(4)=48$ and the set $\{1,2,3,4,6,8,12,16,24,48\}$ contains all the divisors of $48$, but there is a $2$-coloring of $\{1,2,3,4,6,8,12,16,24,48\}$ which does not have a monochromatic solution to Equation~(\ref{Equation:Main}) for $k=4$: we can color the integers in $\{1,8,12,16,24\}$ red and color the integers in $\{2,3,4,6,48\}$ blue.
\end{remark}

\section{Proof of Theorem \ref{Theorem:General2-Coloring}}\label{Section:GeneralTheorem2Coloring}
We first prove Theorem \ref{Theorem:General2-Coloring} for $k=2$.
\begin{thm1.4PartI}\label{Theorem:General2-Coloring_k=2}
    We have $f(a_1,a_2)\geq2\min\{a_1,a_2\}(a_1+a_2)^2$.
\end{thm1.4PartI}
\begin{proof}
    Write $\alpha=\min\{a_1,a_2\}$ and $\beta=a_1+a_2$. We will construct a $2$-coloring of $\{1,2,\ldots,2\alpha\beta^2-1\}$ that does not have a monochromatic solution to Equation~(\ref{Equation:GeneralUnitFractions}) for $k=2$. Let \[A'=\{1,2,\ldots,2\alpha-1\},\]
    \[
    A''=\{2\alpha\beta,2\alpha\beta+1,\ldots,2\alpha\beta^2-1\},
    \]
    $A=A'\cup A''$, and
    \[
    B=\{2\alpha,2\alpha+1,\cdots,2\alpha\beta-1\}.
    \]
    
    Consider the $2$-coloring $\Delta:\{1,2,\ldots,2\alpha\beta^2-1\}\to\{1,2\}$ with $\Delta(a)=1$ for all $a\in A$ and $\Delta(b)=2$ for all $b\in B$. It suffices to show that neither $A$ nor $B$ has a solution to Equation~(\ref{Equation:GeneralUnitFractions}) for $k=2$. We first show that $B$ does not have a solution to Equation~(\ref{Equation:GeneralUnitFractions}) for $k=2$. Suppose, for a contradiction, that $c,b_1,b_2\in B$ such that
    \[
    \frac{1}{c}=\frac{a_1}{b_1}+\frac{a_2}{b_2}.
    \]
    Then we have
    \[
    \frac{1}{c}>\frac{a_1+a_2}{2\alpha\beta}=\frac{\beta}{2\alpha\beta}=\frac{1}{2\alpha}
    \]
    and hence $c<2\alpha$, which is a contradiction.

    Next, we show that $A$ does not have a solution to Equation~(\ref{Equation:GeneralUnitFractions}) for $k=2$. Suppose, for a contradiction, that $c,b_1,b_2\in A$ such that
    \[
    \frac{1}{c}=\frac{a_1}{b_1}+\frac{a_2}{b_2}.
    \]

 {\bf Case 1:} $b_1,b_2\in A''$. Then we have
    \[
    \frac{1}{c}>\frac{a_1+a_2}{2\alpha\beta^2}=\frac{\beta}{2\alpha\beta^2}=\frac{1}{2\alpha\beta}
    \]
    and
    \[
    \frac{1}{c}\leq\frac{a_1+a_2}{2\alpha\beta}=\frac{1}{2\alpha}.
    \]
    It follows that $2\alpha\leq c<2\alpha\beta$, which is a contradiction.

{\bf Case 2}: $b_1\in A'$ or $b_2\in A'$. If we have both $b_1\in A'$ and $b_2\in A'$, then since $a_1/b_1\geq\alpha/(2\alpha-1)>1/2$ and $a_2/b_2\geq\alpha/(2\alpha-1)>1/2$, we have $1/c>1/2+1/2=1$, which is impossible. Hence we cannot have both $b_1\in A'$ and $b_2\in A'$. Without loss of generality, we assume that $b_2\in A'$ and $b_1\in A''$. Since $a_2/b_2<1/c\leq 1$, we have $a_2/b_2\leq (2\alpha-2)/(2\alpha-1)$. Since $1/c-a_2/b_2>0$ and $a_2/b_2>1/2$, we have $c=1$. It follows that
    \[
    \frac{1}{c}-\frac{a_2}{b_2}=1-\frac{a_2}{b_2}\geq1-\frac{2\alpha-2}{2\alpha-1}=\frac{1}{2\alpha-1}.
    \]
    On the other hand, 
    \[
    \frac{1}{c}-\frac{a_2}{b_2}=\frac{a_1}{b_1}\leq\frac{a_1}{2\alpha\beta}<\frac{1}{2\alpha}.
    \]
    Hence we have
    \[
    \frac{1}{2\alpha}>\frac{1}{2\alpha-1},
    \]
    which is a contradiction.
\end{proof}
Now we prove Theorem~\ref{Theorem:General2-Coloring} for $k\geq3$.
\begin{thm1.4PartII}\label{Theorem:General2-Coloring_k=3}
For all $k\geq3$, we have
    \[
    f(a_1,a_2,\ldots,a_k)\geq(2\min\{a_1,a_2,\ldots,a_k\}+1)(a_1+a_2+\cdots+a_k)^2.
    \]    
\end{thm1.4PartII}

\begin{proof}
    Write $\alpha=\min\{a_1,a_2,\ldots,a_k\}$ and $\beta=a_1+a_2+\cdots+a_k$. We will build a $2$-coloring of $\{1,2,\ldots,(2\alpha+1)\beta^2-1\}$ that does not have a monochromatic solution to Equation~(\ref{Equation:GeneralUnitFractions}). Let \[A'=\{1,2,\ldots,2\alpha\},\]
    \[
    A''=\{(2\alpha+1)\beta,(2\alpha+2)\beta+1,\ldots,(2\alpha+1)\beta^2-1\},
    \]
    $A=A'\cup A''$, and
    \[
    B=\{2\alpha+1,2\alpha+2,\ldots,(2\alpha+1)\beta-1\}.
    \]
    
    Let $\Delta:\{1,2,\ldots,(2\alpha+1)\beta^2-1\}\to\{1,2\}$ be a $2$-coloring such that $\Delta(a)=1$ for all $a\in A$ and $\Delta(b)=2$ for all $b\in B$. It suffices to show that neither $A$ nor $B$ has a solution to Equation~(\ref{Equation:GeneralUnitFractions}). We first show that $B$ does not have a solution to Equation~(\ref{Equation:GeneralUnitFractions}). Suppose, by way of contradiction, that $c,b_1,b_2,\ldots,b_k\in B$ such that
    \[
    \frac{1}{c}=\frac{1}{b_1}+\frac{1}{b_2}+\cdots+\frac{1}{b_k}.
    \]
    Then
    \[
    \frac{1}{c}>\frac{a_1+a_2+\cdots+a_k}{(2\alpha+1)\beta}=\frac{\beta}{(2\alpha+1)\beta}=\frac{1}{2\alpha+1}.
    \]
    It follows that $c<2\alpha+1$, which is a contradiction. 

    Now we show that $A$ does not have a solution to Equation~(\ref{Equation:GeneralUnitFractions}). Suppose, by way of contradiction, that $c,b_1,b_2,\ldots,b_k\in A$ such that
    \[
    \frac{1}{c}=\frac{1}{b_1}+\frac{1}{b_2}+\cdots+\frac{1}{b_k}.
    \]

{\bf Case 1}: $b_1,b_2,\ldots,b_k\in A''$. Then we have
    \[
    \frac{1}{c}>\frac{a_1+a_2+\cdots+a_k}{(2\alpha+1)\beta^2}=\frac{1}{(2\alpha+1)\beta}
    \]
    and
    \[
    \frac{1}{c}\leq\frac{a_1+a_2+\cdots+a_k}{(2\alpha+2)\beta}=\frac{1}{2\alpha+1}.
    \]
    It follows that $2\alpha+1\leq c<(2\alpha+1)\beta$, which is a contradiction.

{\bf Case 2}: $b_j\in A'$ for some $j\in\{1,2,\ldots,k\}$. Without loss of generality, we assume that $b_k\in A'$. Notice that this implies that $b_1,b_2,\ldots,b_{k-1}\in A''$. To see this, suppose $b_j\in A'$ for some $j\in\{1,2,\ldots,k-1\}$. Since $a_k,a_j\geq\alpha$ and $b_k,b_j\leq 2\alpha$, we have $a_k/b_k,a_j/b_j\geq1/2$. Then we have
    \[
    \frac{1}{c}-\frac{a_k}{b_k}-\frac{a_j}{b_j}\leq\frac{1}{c}-1\leq0.
    \]
    This is a contradiction because $k\geq3$. Now we have
    \[
    \frac{1}{c}-\frac{a_k}{b_k}=\frac{a_1}{b_1}+\frac{a_2}{b_2}+\cdots+\frac{a_{k-1}}{b_{k-1}}\leq\frac{a_1+a_2+\cdots+a_{k-1}}{(2\alpha+1)\beta}=\frac{\beta-a_k}{(2\alpha+1)\beta}.
    \]
    Since $a_k/b_k\geq1/2$, we must have $c=1$. At the same time, we have $a_k/b_k\leq (2\alpha-1)/(2\alpha)$. It follows that $1-a_k/b_k\geq1/(2\alpha)$. Hence we have
    \[
    \frac{1}{2\alpha}\leq\frac{\beta-a_k}{(2\alpha+1)\beta}<\frac{1}{2\alpha+1},
    \]
    which is a contradiction.
\end{proof}

\section{Concluding Remarks}\label{Section:Concluding}
We presented computational results for $f_2(k)$ in Section~\ref{Section:Computation2Coloring}. Here we present some computational results for 3-colorings. Table~\ref{Table3} contains these computational results and the difference between $f_3(k)$ and $7k^3$. Myers and Parrish \cite{MyersParrish2018} computed $f_3(2)$ and we compute $f_3(k)$ for $k\in\{3,4,5\}$. 
\begin{table}[H]
\renewcommand{\arraystretch}{1.5}
\centering
\begin{tabular}{|c|c|c|c|c|}
\hline
$k$&2&3&4&5\\\hline
$f_3(k)$&3276&585&960&$1650$\\\hline
$f_3(k)-7k^3$&3220&396&512&$775$\\\hline
\end{tabular}
\caption{Computational Results for $f_3(k)$, $2\leq k\leq 5$}\label{Table3}
\end{table}
As suggested by Table~\ref{Table3}, our lower bound $f_3(k)\geq7k^3$ for $k\geq3$ in Theorem~\ref{Theorem:Main} is unlikely to be sharp.

\begin{problem}
    Improve, if possible, the lower bound for $f_3(k)$.
\end{problem}

By Theorem~\ref{Theorem:2-ColorPrimePowers}, we have $f_2(k)\geq3k^2+1$ if $k$ is an odd prime power. However, Table~\ref{Table2} suggests that $f_2(k)>3k^2+1$ whenever $k$ is an odd prime power.

\begin{problem}
    Improve, if possible, the lower bound for $f_2(k)$ when $k$ is an odd prime power.
\end{problem}

The first author \cite{Gaiser2024} proved a cubic upper bound for $f_2(k)$ for all $k\geq2$. However, Tables~\ref{Table1} and \ref{Table2} suggests a quadratic upper bound for $f_2(k)$ when $k\geq4$.

\begin{problem}
    Find, if possible, a quadratic upper bound for $f_2(k)$ for all $k\geq4$.
\end{problem}

Finally, we used Lemma~\ref{Lemma:Elementary} in the proof of Theorem~\ref{Theorem:Main}. A special case of Lemma~\ref{Lemma:Elementary} is that the smallest distance between two distinct elements in $\{1/1,1/2,\ldots,1/n\}$ is $1/[(n-1)n]$. This leads to the following problem:

\begin{problem}\label{Problem:UnitFractionDistance}
    Let $U(n)=\{1/1,1/2,\ldots,1/n\}$ and $k\geq2$. Determine
    \[
    \min\{a-b_1-b_2-\cdots-b_k>0\ |\ a,b_1,b_2,\ldots,b_k\in U(n)\}. 
    \]
\end{problem}

\section*{Acknowledgements}
The authors would like to thank Paul Horn and Aaron Robertson for reading a previous draft of this paper and providing helpful feedback. This paper is based upon the work conducted as an undergraduate summer research project under the mentorship of the first author. We thank Community College of Aurora for providing the facilities and resources that supported this research. The authors used Python for programming with the help of ChatGPT 5.5 and ChatGPT Codex.

\end{document}